\theoremstyle{plain}
\newtheorem{theorem}{Theorem}[section]
\newtheorem{lemma}[theorem]{Lemma}
\newtheorem{proposition}[theorem]{Proposition}
\theoremstyle{definition}
\newtheorem{definition}[theorem]{Definition}
\theoremstyle{remark}
\newcommand{\R}{\mathbb{R}}
\newcommand{\N}{\mathbb{N}}
\newcommand{\G}{\mathcal{G}}
\newcommand\norm[1]{\left\lVert#1\right\rVert}
\newcommand{\half}{\frac{1}{2}}
\title{Filtering methods for coupled inverse problems}
\author{Michael Herty\footnote{Institut f\"{u}r Geometrie und Praktische Mathematik,  		RWTH Aachen University,  
		Templergraben~55, 52062~Aachen, Germany, $\{herty,iacomini\}@igpm.rwth-aachen.de$} \mbox{ and } Elisa Iacomini$^*$}               
\date{\today}
\begin{document}

 \maketitle
\begin{abstract}
	We are interested in ensemble methods to solve multi-objective optimization problems. An ensemble Kalman method is proposed to solve a formulation of the nonlinear problem using  a weighted function approach. An analysis of the mean field limit of the ensemble method yields an explicit update formula for the weights. Numerical examples show the improved performance of the proposed method.    
\end{abstract}

\section{Introduction}

In many applications, it is often required to determine the model parameters that approximate observable and noisy data. In this work we are concerned with those inverse problems in a finite dimensional setting, i.e., 
\begin{equation}\label{eq:enkf}
    y=\G(u)+\eta
\end{equation}
where $\G$ is the (possible nonlinear) forward operator between the finite dimensional spaces $X=\R^d$ and $Y=\R^k$ with $d,k\in \N$, $u \in X$ is the unknown parameter, $y\in Y$ is the observation and $\eta \sim \mathcal{N}(0,\Gamma)$ is the observational noise where $\Gamma$ is a known covariance matrix.  Given the noisy measurements, the observation and the mathematical model $\G$, we are interested in finding the corresponding control $u$.  Certainly, those problems have been widely studied and different approaches have been proposed in the literature in order to overcome possible ill--posedness of the problem, see e.g. \cite{engl1996regularization} for a survey.

In this work, we will focus on a particular numerical method for solving \eqref{eq:enkf}, namely the Ensemble Kalman Filter (EnKF). This method was introduced in the last decade \cite{evensen1994sequential}, but has gained  recent attention due to novel developments and insights, see e.g. \cite{herty2019kinetic,herty2020continuous,schillings2017analysis} and references therein. The EnKF aims to solve a least--square formulation of the inverse problem and produces $u^*$ such that 

\begin{equation}\label{eq:phi_sing}
   u^*=argmin_{u \in X} \half \norm{\Gamma^{-\half} (y-\G(u))}^2.
\end{equation}

The EnKF is an iterative filtering method which sequentially updates each member of an ensemble $k=1,\dots,K$ of  elements $u_k$ in the space $X$ by means of the Kalman update formula, using the knowledge of the model $\mathcal{G}$ and  given observational data $y$. The method is gradient free and even for small number of ensembles $K$ satisfactory results have been reported \cite{majda2018performance}. Several contributions have been made regarding the application and analysis of this method, see e.g.  \cite{aanonsen2009ensemble, blmker2018strongly, blomker2019well, chada2020tikhonov, iglesias2015iterative,janjic2014conservation,schillings2017analysis, schwenzer2020identifying} and extensions to the constraint case \cite{chada2019incorporation}.

Here, we are interested in a possible extension of the method towards a multi--objective minimization formulation. Those are also known as coupled inverse problems where for given data, a choice of  parameters for competing models has to be determined. Examples of such problems stem from  applications in geophysics  \cite{kabanikhin2015coupled} to oil and water reservoir problems \cite{sun1990coupled}. We propose a formulation for general multi-objective optimization problems in the forthcoming section using a classical weighted function approach. By extending prior work \cite{herty2019kinetic} we will focus on suitable update strategies for the weights based on a mean field description of the method. Numerical results will be performed to highlight the properties of the proposed method.


\section{On the Ensemble Kalman Filter (EnKF) for Coupled Inverse Problems}\label{sec_1}

We consider  $l$ coupled inverse problems for a set of parameters $u\in X$ and consider the simultaneous minimization 
of $\G_1,\dots,\G_l$ models, given observations $y_1,\dots, y_l \in Y$: 

{\begin{equation}\label{eq:enkfo2}
	\min_{u \in X}  \left( \| \Gamma^{-\half} \left( y_1-\G_1(u) \right) \|, \dots,  \| \Gamma^{-\half} 
	\left( y_l-\G_l(u) \right) \|
	\right).
\end{equation}}
The observational noise on the data $y_i$ is  $\eta_i   \sim \mathcal{N}(0,\Gamma)$ with fixed covariance matrix  $\Gamma$. 
Finding $u$ that simultaneously solves \eqref{eq:enkfo2} is called multi--objective or multi criteria optimization, see e.g.  \cite{ehrgott2005multicriteria, miettinen2012nonlinear,pardalos2017non}.  In the following we use the concept of  Pareto optimality \cite{pardalos2017non} that defines a notion of minimum for the vector--valued optimization problem \eqref{eq:enkfo2}: 

\begin{definition}\label{def1}
	A point $u^*\in \R^d$ is called Pareto optimal if and only if there exists no point $u \in \R^d$ such that $\G_i(u) \le \G_i(u^*)$ for all $i=1,2,\dots,l$ and $\G_j(u)\le \G_j(u^*)$ for at least one $j\in \{1,2,\dots,l\}$.
\end{definition} 
The set $\mathcal{S}_U$ of all  $u^*$ fulfilling Definition \eqref{def1} is called Pareto set, while its representation in the space of objectives  $\mathcal{S}_G := \{ \left(y_i-\G_i(u)\right)_{i=1}^l : u \in \mathcal{S} \}$ is called Pareto front.  An approach based on a weighted function  approach  \cite{miettinen2012nonlinear} is followed to compute $\mathcal{S}_G:$  Given a vector $\mathbf{\mathbb{\lambda}} \in   \Lambda$ where  

\begin{equation}\label{lambda} \Lambda:=\{ \mathbf{\mathbb{\lambda}} \in \R^l_+: \mathbf{\mathbb{\lambda}} \cdot \mathbf{1}=1   \}
\end{equation} and $\mathbf{1}=(1,\dots,1)^T,$
 we define the weighted objective function 
 
\begin{equation}\label{eq:g}
    	\G(u,\mathbf{\mathbb{\lambda}}):= \sum\limits_{i=1}^\ell \mathbf{\mathbb{\lambda}}_{i}  \G_i(u): X\times \Lambda \to Y.
\end{equation}
The convex combination of the observations is given by 
$    y= \sum\limits_{i=1}^\ell \mathbf{\mathbb{\lambda}}_{i}  y_i. $
An approximation to the Pareto front $\mathcal{S}_U$ is then obtained by 
\begin{eqnarray}\label{paretof}
	{P}:=\{ u^*(\mathbb{\lambda}) : \mathbb{\lambda} \in \Lambda  \}, \label{pareto1}
\end{eqnarray}
where 

\begin{equation}\label{opt1}
	u^*(\mathbb{\lambda}) = argmin_{u \in X} \Phi(u,\mathbb{\lambda}), \;  \Phi(u,y,\mathbb{\lambda})=\frac{1}{2} \norm{\Gamma ^{-\half} \sum_{i=1}^l\mathbb{\lambda}_i \left( y_i-\G_i(u) \right)}^2.
\end{equation}


In case of a convex problem, $S_U = P$, {see \cite[Theorem 3.1.4]{miettinen2012nonlinear}.
Note that $\Lambda$ is also called the probability simplex \cite{boyd2004convex}.}


\subsection{EnKF and Mean Field Description of Parameterized Problem \eqref{opt1} }

For the efficient computation of the Pareto front \eqref{pareto1} we propose an ensemble based method following recent work \cite{herty2019kinetic,herty2020continuous,schillings2017analysis}. 
For a fixed value of $\mathbb{\lambda} \in \Lambda$, the EnKF method samples $J>0$ initial values $u^{j,0} \in X$ and iterate according to equation  \eqref{eq:updating} for some $\Delta t>0.$ Under suitable assumptions on $\G$ it has been shown in \cite{schillings2017analysis}, that 

\begin{align}
\lim\limits_{J\to\infty}	\frac1J \sum\limits_{j=1}^J u^{j,n}(\mathbb{\lambda} ) = u^\ast(\mathbb{\lambda} ),
\end{align}
where $u^\ast(\mathbb{\lambda} )$ solves equation \eqref{opt1}, \cite[Theorem 1]{ding2021ensemble}. For further results on stability we refer to \cite{herty2019kinetic,schillings2017analysis}. For $y$ and $\G$ defined by \eqref{eq:g} each member $j$ of the ensemble is propagated according to  

\begin{equation}\label{eq:updating}
    u^{j,n+1}=u^{j,n}+ C(U^n)\left( D(U^n) + \frac{1}{\Delta t} \Gamma^{-1} \right)^{-1}\left[ y -\G(u^{n},\lambda) \right],
\end{equation}
where $C(U^n)$ and $D(U^n)$ are the covariance matrices depending on the  set  of ensembles $U^n(\mathbb{\lambda} )$ at the iteration $n$ and on $\G(U^n)$: 

  \begin{align}\label{eq:c_disc}
    U^n(\mathbb{\lambda} ) &=\{u^{j,n}(\mathbb{\lambda} )\}_{j=1}^J, \\
    \Bar{U}^n&:=\frac{1}{J}\sum_{k=1}^J u^{j,n}(\mathbb{\lambda} ), \; \Bar{\G}:=\frac{1}{J}\sum_{k=1}^J\sum_{i=1}^l \mathbb{\lambda}_i \G_i(u^{j,n}(\mathbb{\lambda}),\mathbb{\lambda} )), \\
    C(U^n(\mathbb{\lambda} ))=& \frac{1}{J}\sum_{k=1}^J(u^{k,n}(\mathbb{\lambda} )-\Bar{U}^n)\otimes(\G(u^{k,n}(\mathbb{\lambda}),\mathbb{\lambda} )-\Bar{\G}), \\
    D(U^n(\mathbb{\lambda} ))=& \frac{1}{J}\sum_{k=1}^J \left[\G(u^n(\mathbb{\lambda}),\mathbb{\lambda} ))- \Bar{\G}\right]\otimes\left[\G(u^n(\mathbb{\lambda}),\mathbb{\lambda} )- \Bar{\G}\right].
\end{align}
Several extensions have been studied and we refer to the references above for more details. Also, the limiting equation for $\Delta t \to 0$ under the scaling $\Gamma^{-1}=\Delta t \Gamma^{-1}$ of the previous dynamics has been studied and analyzed, e.g.  \cite{schillings2017analysis,herty2019kinetic}. In the case $\Delta t \to 0$ and $J\to \infty$ a mean field limit is obtained. Rigorous results can be found e.g. in \cite{herty2019kinetic} and in \cite{pareschi2013interacting,carrillo2010particle} for general mean field results on interacting particle systems. Since there is no dynamics in $\lambda$ the following result is a simple consequence of the existing results for the convergence for $J\to \infty$ given e.g. in  \cite{carrillo2021wasserstein,ding2021ensemble,garbuno2020interacting,herty2019kinetic,herty2020continuous,schillings2017analysis}, in particular \cite[Theorem 3]{schillings2017analysis}.

\begin{proposition} \label{prop}
Assume $\G_i(u)=G_i u$ for $i=1,\dots, l$  and let $\Phi$ be given by equation \eqref{opt1}.   
Let $\mathcal{P}(X)$ be the space of probability measures on $X$ equipped with the 1-Wasserstein distance. 

Let  $J>0$ and assume  $u^{j,0} \in X$ for $j=1,\dots, J$ given and denote by $f^U_0(v)=\frac1J \sum_{j=1}^J  \delta \left(u^{j,0}-v \right)$ the empirical measure associated to the initial data.  
 The empirical 
measure   

\begin{align}
	f^U(v,\mathbb{\lambda},t) =\frac1J \sum_{j=1}^J  \delta \left(u^j(t,\mathbb{\lambda})-v \right) \in \mathcal{P}(X \times \Lambda \times \R^+) 
\end{align}
where  $u^j(t,\mathbb{\lambda})$ fulfills  for all $j=1,\dots,J$ 

\begin{align}
	\frac{d}{dt} u^j(t,\mathbb{\lambda}) &=-  C(U(t,\mathbb{\lambda})) \nabla \Phi(u^j(t,\mathbb{\lambda}),y,\mathbb{\lambda}) , \;  	 u^j(t,\mathbb{\lambda}) =u^{j,0},  \\
	\mathcal{C}(U) &=\frac{1}{J}\sum_{j=1}^J (u^j-\Bar{U})\otimes(u^j-\Bar{U}), \; \bar{U}=\frac1J\sum\limits_{j=1}^J  u^j,  
\end{align}
	is a solution   in the distributional sense to the mean field equation 
	
\begin{align}\label{meanfield}
\partial_t f(v,\mathbb{\lambda},t)-\nabla_v \cdot \left( \mathcal{C}(t,\mathbb{\lambda})\nabla_v \Phi(u,y,\mathbb{\lambda})f(v,\mathbb{\lambda},t) \right)=0, \; f(v,\lambda,0)=f_0(v), 
\end{align}
subject to  initial data $ f_0(v,\mathbb{\lambda}) \in \mathcal{P}(X,\Lambda)$ and where the nonlocal operator $C(t,\mathbb{\lambda})=C[f](t,\mathbb{\lambda})$ is given by

\begin{align}\label{C}
	(\mathcal{C}[f](\mathbb{\lambda},t))_{k,i} =\int_{X} v_k v_i f(v,\mathbb{\lambda},t)\, dv - \int_{X} v_k f(v,\mathbb{\lambda},t)\, dv  \int_{X} v_i f(v,\mathbb{\lambda},t)\, dv , \; (k,i)=1,\dots, d.
\end{align}     
Furthermore, if for $J\to \infty$ we have $W_1(f^U_0,f_0) \to 0 $ for some $f_0 \in \mathcal{P}(X),$ then for any $t\geq 0$ 
we have $W_1(f^U(\cdot,t), f(\cdot,t)) \to 0,$ where $f$ is a solution in the distributional sense to \eqref{meanfield}. 
\end{proposition}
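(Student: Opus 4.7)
The plan is to exploit the fact that the weight vector $\lambda$ enters the dynamics as a fixed parameter only, with no evolution equation in $\lambda$. For each frozen $\lambda \in \Lambda$ the problem therefore reduces to the classical linear EnKF flow for the aggregated forward operator $G(\lambda) := \sum_{i=1}^{l} \lambda_i G_i$ and observation $y(\lambda) := \sum_{i=1}^{l} \lambda_i y_i$, since under the linearity hypothesis $\Phi(u,y,\lambda) = \tfrac{1}{2}\|\Gamma^{-1/2}(y(\lambda)-G(\lambda)u)\|^{2}$. This is precisely the setting in which \cite[Theorem 3]{schillings2017analysis} applies, so the argument should reduce to invoking that result pointwise in $\lambda$.

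First I would verify that the empirical measure $f^U$ is a distributional solution of \eqref{meanfield}. For any test function $\varphi \in C_c^\infty(X)$, differentiating $\int \varphi\, f^U\, dv = J^{-1}\sum_j \varphi(u^j)$ and using the particle ODE gives
$$\frac{d}{dt}\int_X \varphi(v)\, f^U(v,\lambda,t)\, dv = -\frac{1}{J}\sum_{j=1}^{J} \nabla\varphi(u^j)\cdot\mathcal{C}(U)\nabla\Phi(u^j,y,\lambda).$$
The key algebraic observation is that evaluating \eqref{C} on $f^U$ returns exactly the sample covariance $\mathcal{C}(U)$, since the delta atoms convert the integrals in \eqref{C} into empirical sums. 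Rewriting the right-hand side as an integral against $f^U$ then yields the weak form of \eqref{meanfield}.

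Next I would address the $W_1$-convergence. Because $\G_i$ is linear, the drift $v\mapsto \mathcal{C}[f]\nabla\Phi(v,y,\lambda)$ is polynomial in $v$ and depends on $f$ only through its first and second moments. I would construct a coupling between $f^U$ and the mean field solution $f$ by propagating the particle flow and the nonlinear characteristic flow of \eqref{meanfield} from optimally coupled initial data for $f^U_0$ and $f_0$, and then close a Gronwall estimate of the form
$$W_1(f^U(\cdot,\lambda,t),f(\cdot,\lambda,t)) \le e^{L(\lambda)\,t}\, W_1(f^U_0,f_0),$$
with $L(\lambda)$ depending on a uniform-in-$J$ bound for the second moments of $f^U(\cdot,\lambda,t)$. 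Since $\Lambda$ is a compact simplex and all the ingredients depend continuously on $\lambda$, the pointwise estimate can be made uniform in $\lambda$ if desired.

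The hard part is the a priori moment bound underlying $L(\lambda)$: the drift is quadratically nonlinear in the empirical measure, so without a control on $\mathcal{C}(U(t,\lambda))$ the local Lipschitz constant could blow up in finite time. In the linear EnKF setting, however, $\mathcal{C}(U(t,\lambda))$ is known to stay bounded and in fact to contract along the flow, since the ensemble collapses toward the affine subspace spanned by the initial spread. This is exactly the content of the moment estimates in \cite{schillings2017analysis,ding2021ensemble,carrillo2021wasserstein}, and invoking those estimates at fixed $\lambda$ closes the Gronwall argument and hence the proof.
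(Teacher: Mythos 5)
Your proposal is correct and follows essentially the same route as the paper, which gives no written proof but simply observes that $\lambda$ carries no dynamics and invokes the existing mean--field convergence results for the linear EnKF (in particular \cite[Theorem 3]{schillings2017analysis}) pointwise in $\lambda$. Your writeup merely spells out the two ingredients the paper leaves implicit --- the empirical measure being a weak solution because $\mathcal{C}[f^U]$ coincides with the sample covariance, and the coupling/Gronwall argument with the a priori covariance bounds from the cited references --- so it is a faithful expansion rather than a different proof.
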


Denote by $P(t)$ an approximation to $P$ expressed in terms of the probability density $f(\cdot,t)$ by

\begin{equation}
	\label{P2} 
		{P}(t)=\left\{ \int_X u \; d f(u,\mathbb{\lambda},t): \;   \mathbb{\lambda} \in \Lambda  \right\}. 
\end{equation}
Due to the convergence of the particles to $u^\ast(\lambda)$, we expect that for $t\to \infty,$ the set $P(t)$ approaches the 
set $P$ given by  \eqref{pareto1}, see \cite{herty2020continuous} for the corresponding result in the case independent of $\lambda.$ 
\par 
The mean field equation is independent of the ensemble size $J$ and therefore possibly attractive for numerical methods. For an efficient computation of $P(t)$ the solution to equation \eqref{meanfield} for any $\lambda \in \Lambda$ is required. In numerical discretization of equation \eqref{meanfield} a suitable grid in $\lambda$ is hence necessary. In the following aim to provide a method to develop a strategy for choosing those quadrature points in $\Lambda.$ This is obtained by considering the sensitivity of $f$ with respect to $\lambda.$ 

\subsection{Sensitivity of Mean Field and Moment Equations }\label{sec_3}

The sensitivity of $f$ with respect to $\lambda$ can be studied e.g. by  formally differentiating the meanfield equation \eqref{meanfield} 
leading to the set of equations   $i=1,\dots,l$ 

\begin{align}\label{eq:der}
    0 &=   \partial_t \partial_{\lambda_i} f(v,\lambda,t) - \nabla_v \partial_{\lambda_i} \Big( C\ \nabla_v \Phi(v,y,\lambda) f(v,\lambda, t) \Big) \\
      & =  \partial_t \partial_{\lambda_i} f(v,\lambda,t) - \nabla_v \Big( \partial_{\lambda_i} C \nabla_v \Phi(v,y,\lambda) f(v,\lambda, t) + \\ & \qquad C\ \partial_{\lambda_i} (\nabla_v  \Phi(v,y,\lambda)) f(v,\lambda, t) + C\ \nabla_v  \Phi(v,y,\lambda) \partial_{\lambda_i} f(v,\lambda, t) \Big) \\
      & =:  \partial_t \partial_{\lambda_i} f(v,\lambda,t) - \nabla_v \Big(  T_1 + C T_2 f(v,\lambda,t) + C\ \nabla_v  \Phi(v,y,\lambda) \partial_{\lambda_i} f(v,\lambda, t) \Big).
\end{align}
where $C=C[f](\lambda,t)$ is given by equation \eqref{C}. Since the initial data in \eqref{meanfield} is assumed to be independent of $\mathbb{\lambda}$ we obtain for $i=1,\dots, l$

\begin{align}\label{der-in}
	\partial_{\lambda_i} f(v,\lambda,0)=0.
	\end{align}
 Under the assumption of Proposition \ref{prop}, namely, 
 
 \begin{align}\label{ass1}
 	\G_i(u) = G_i u,  \; i =1,\dots, l, 
 \end{align}
 some terms of equation \eqref{eq:der} can be further simplified to 
 
\begin{align}
    T_1:=&\partial_{\lambda_i} C\ \nabla_v \Phi(v,y,\lambda) f = (E_{\lambda_i}-2 m_{\lambda_i} \otimes m )\ \nabla_v \Phi(v,y,\lambda) f, \\
	T_2:=& \frac{\partial (\nabla_v \Phi)}{\partial \lambda_i} =\partial_{\lambda_i} \Big( (\G)^T \Gamma^{-1} (y-\G v) \Big) \\
	&= (\sum_{i=1}^l \G_i)^T  \Gamma^{-1}\Big(y-\G v\Big)+
	\left( \G \right)^T \Gamma^{-1} \Big( \sum_{i=1}^l y_i - \G_i v  \Big).
\end{align}


Computationally solving  system \eqref{def1} to obtain sensitivity information on $\lambda$ is prohibitive. However, $P(t)$ given by equation \eqref{P2} only depends on the first moment of $f$ and not on the full solution. Hence, we consider only sensitivity of the moments  of the solution, i.e., define the first and second moments of $f$ as

\begin{equation}\label{eq:moments}
	m(\mathbb{\lambda},t)  =\int_{X} v  d f(v,\mathbb{\lambda},t) \in \R^d, \; 
	E(\mathbb{\lambda},t) =\int_{X} v \otimes v d f(v,\mathbb{\lambda},t) \in \R^{d\times d}.
\end{equation}
Then, for $i=1,\dots, l$, the sensitivity of $(m,E)$ is given by 

\begin{align}
	 \frac{\partial m}{\partial \lambda_i} =\int v d \frac{\partial f}{\partial \lambda_i}(v,\lambda,t), \; 
	\frac{\partial E}{\partial \lambda_i} =\int v\otimes v d \frac{\partial f}{\partial \lambda_i}(v,\lambda,t) 
\end{align}
and they fulfill a closed coupled system of equations of ordinary differential equations obtained by integration of equation \eqref{eq:der} given by the system \eqref{eq:mom_eq}.     

\begin{lemma}
	Assume \eqref{ass1} and let $\Phi$ be given by equation \eqref{opt1}. 	If  $f=f(v,\mathbb{\lambda},t) \in \mathcal{P}(X \times \Lambda \times \R^+)$ with finite second moment and $\nabla_\lambda f$ be  differentiable solution  to \eqref{meanfield} with initial data $f(v,\lambda,0) = f_0(v) \in \mathcal{P}(X)$ and to equation \eqref{eq:der} with initial data \eqref{der-in}.
	\par 
	Then, the moments $(m,E)$ and their derivatives fulfill the following system of ordinary differential equations for $i=1,\dots, l$ 
	
\begin{align}\label{eq:mom_eq}
\begin{cases}
  \frac{d}{dt} m &= - CG^T \Gamma^{-1}\left( y-G\ m \right) \\
\frac{d}{dt} m_{\lambda_i} &= - (\partial_{\lambda_i} C)
    G^T \Gamma^{-1} (y-G\ m)
-C  (\partial_{\lambda_i} G )^T \Gamma^{-1} ( y- G m) \\
&- C G^T \Gamma^{-1}\left( \partial_{\lambda_i} y - (\partial_{\lambda_i} G)m \right)
+ C G^T \Gamma^{-1}  G  m_{\lambda_i} \\
\frac{d}{dt}E &= - C G^T \Gamma^{-1} (y \otimes m - G E) - [ C G^T \Gamma^{-1} (y \otimes m - G E)]^T\\
 \frac{d}{dt} E_{\lambda_i} &=-CG^T\Gamma^{-1}\left(y\otimes m_{\lambda_i}-GE_{\lambda_i} \right) - \bar{C}_i G^T\Gamma^{-1}\left( y \otimes m-GE \right)\\
    &-C\partial_{\lambda_i} G^T \Gamma^{-1} (y\otimes m- GE)-CG^T\Gamma^{-1}(\partial_{\lambda_i} y m-\partial_{\lambda_i} G E)\\
    & -[CG^T\Gamma^{-1}\left(y\otimes m_{\lambda_i}-GE_{\lambda_i} \right)]^T - [\bar{C}_i G^T\Gamma^{-1}\left( y \otimes m-GE \right)]^T\\
    &-[C\partial_{\lambda_i} G^T \Gamma^{-1} (y\otimes m- GE)]^T-[CG^T\Gamma^{-1}(\partial_{\lambda_i} y m-\partial_{\lambda_i} G E)]^T\\
   C &= E - m \otimes m, \\
   {\bar C}_i &= E_{\lambda_i} - 2 m_{\lambda_i} \otimes m.
\end{cases}
\end{align}

and initial data independent of $\lambda$

\begin{align}\label{initial-mh}
	m(0)=\int_X v d f_0(v) , \; E(0)=\int_X  v\otimes v  d f_0(v) , \;  m_{\lambda_{i}}(0)=0, \;  E_{\lambda_{i}}(0)=0.
\end{align}
For any time $T>0,$ there exists a unique solution $(m,E,\partial_{\lambda_1} m,  \partial_{\lambda_1} E, \dots, 
,\partial_{\lambda_l} m,  \partial_{\lambda_l} E, ) \in C^1(0,T; \R^{(l+1) \cdot ( d+d\times d)})$ 
to the system \eqref{eq:mom_eq}. 
\end{lemma}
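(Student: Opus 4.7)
The plan is to derive the ODE system in two stages: first produce the equations for the moments $(m,E)$ by taking moments of the mean field equation \eqref{meanfield}, then produce the equations for the sensitivities $(m_{\lambda_i},E_{\lambda_i})$ by taking moments of the differentiated equation \eqref{eq:der}. Existence and uniqueness on $[0,T]$ is then a Picard--Lindel\"of argument once suitable a priori bounds on the state are in place.

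First I would exploit the linear model assumption \eqref{ass1}: writing $G=G(\lambda)=\sum_{i}\lambda_i G_i$ and $y=y(\lambda)=\sum_i\lambda_i y_i$, one has $\Phi(v,y,\lambda)=\tfrac12\|\Gamma^{-1/2}(y-Gv)\|^2$ and therefore $\nabla_v\Phi(v,y,\lambda)=-G^T\Gamma^{-1}(y-Gv)$, a quantity that is \emph{affine} in $v$. This affinity is what makes the moment hierarchy close. Multiplying \eqref{meanfield} by $v$ (resp.\ $v\otimes v$) and integrating by parts, the only integrals that appear are $\int v\,df$, $\int v\otimes v\,df$, so I immediately obtain
\[
\tfrac{d}{dt}m = -\,C\,G^T\Gamma^{-1}(y-Gm),\qquad
\tfrac{d}{dt}E = -\,CG^T\Gamma^{-1}(y\otimes m-GE)-\bigl[CG^T\Gamma^{-1}(y\otimes m-GE)\bigr]^T,
\]
with $C=E-m\otimes m$ being exactly the covariance expression \eqref{C}. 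This accounts for the first and third equation of \eqref{eq:mom_eq}.

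Next I would take moments of \eqref{eq:der}. Integrating against $v$ (resp.\ $v\otimes v$) and integrating by parts, the three flux pieces $T_1$, $CT_2f$, and $C\nabla_v\Phi\,\partial_{\lambda_i}f$ generate exactly the summands written in the lemma: the $\partial_{\lambda_i}C$-term produces $\bar C_i=E_{\lambda_i}-2m_{\lambda_i}\otimes m$, the $T_2$-term produces contributions involving $\partial_{\lambda_i}G=G_i$ and $\partial_{\lambda_i}y=y_i$, and the final term reproduces the linear part in $(m_{\lambda_i},E_{\lambda_i})$ with the same operator $CG^T\Gamma^{-1}G$ that appears in the homogeneous equation. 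Symmetrisation with the transpose gives the $E_{\lambda_i}$-equation. The initial conditions $m_{\lambda_i}(0)=0$, $E_{\lambda_i}(0)=0$ follow directly from \eqref{der-in} because $f_0$ is $\lambda$-independent. This verifies the form of \eqref{eq:mom_eq}.

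For existence and uniqueness on a fixed interval $[0,T]$, I would argue as follows. The right-hand side of \eqref{eq:mom_eq} is a polynomial in the state $z=(m,E,m_{\lambda_i},E_{\lambda_i})_{i=1}^l$ with coefficients depending continuously on $\lambda$ through $G(\lambda)$ and $y(\lambda)$; hence it is locally Lipschitz and Picard--Lindel\"of gives a unique maximal $C^1$ solution. The only obstruction to reaching time $T$ is a priori blow-up, and this is the main technical step. I would establish it in two substeps. \emph{Step A:} bound $(m,E)$ on $[0,T]$. The $m$-equation is of gradient-flow type in the preconditioned least-squares functional, and a direct computation using $C=E-m\otimes m\succeq 0$ yields $\tfrac{d}{dt}\|\Gamma^{-1/2}(y-Gm)\|^2\le 0$ and a contraction of $C$ along the flow, as already used in \cite{schillings2017analysis,herty2019kinetic}. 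This gives uniform bounds on $m$ and $E$. \emph{Step B:} bound the sensitivities. With $(m,E)$ bounded, the sensitivity equations for $(m_{\lambda_i},E_{\lambda_i})$ are \emph{linear} in these unknowns with bounded, continuous coefficients and a bounded forcing (built from $G_i$, $y_i$, $m$, $E$), so Gr\"onwall's inequality prevents blow-up on $[0,T]$. Combining the two yields a unique $C^1$ solution on $[0,T]$ as claimed. The main obstacle is Step A: closing the a priori estimate on $(m,E)$ in the linear-forward-model case; this is exactly the content of the EnKF dissipation estimates cited above, which I would invoke rather than reprove.
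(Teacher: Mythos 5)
Your derivation of the moment system is essentially the paper's own argument: integrate the mean field equation \eqref{meanfield} against $v$ and $v\otimes v$ to get the $(m,E)$ equations, then integrate the formally differentiated equation \eqref{eq:der} against the same test functions, using that the affine structure of $\nabla_v\Phi$ under \eqref{ass1} closes the hierarchy, that $\int_X \partial_{\lambda_i}f\,dv=0$ by conservativity and \eqref{der-in}, and that $\partial_{\lambda_i}C=E_{\lambda_i}-2m_{\lambda_i}\otimes m$ by linearity of $C$ in $f$. That part matches.

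Where you genuinely diverge is the well-posedness claim. The paper disposes of it in one sentence by asserting that the right-hand side of \eqref{eq:mom_eq} ``is Lipschitz'' in $(m,m_{\lambda_i},E,E_{\lambda_i})$; since the right-hand side contains products such as $CG^T\Gamma^{-1}GE$ with $C=E-m\otimes m$, it is a polynomial of degree at least two in the state and hence only \emph{locally} Lipschitz, so the paper's argument as written gives only a local-in-time solution. You correctly identify this and supply the missing ingredients: Picard--Lindel\"of for local existence, an a priori bound on $(m,E)$ via the EnKF dissipation estimates of \cite{schillings2017analysis,herty2019kinetic} (Step A), and a Gr\"onwall argument for the sensitivities, which satisfy a linear system with bounded coefficients once $(m,E)$ is controlled (Step B). Your route is more work but actually delivers the global statement ``for any $T>0$'' claimed in the lemma; the paper's one-liner buys brevity at the cost of rigor. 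One caveat: the dissipation estimate in Step A depends on the sign conventions in \eqref{eq:mom_eq} (as stated, $\frac{d}{dt}m=-CG^T\Gamma^{-1}(y-Gm)$ is the \emph{ascent} direction relative to $\nabla_v\Phi=-G^T\Gamma^{-1}(y-Gv)$), so you should verify that the monotonicity of $\|\Gamma^{-1/2}(y-Gm)\|^2$ and the contraction of $C$ hold for the system exactly as written before invoking the cited results.
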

\begin{proof}
The right hand side of \eqref{eq:mom_eq} is Lipschitz with respect to $(m,m_{\lambda_i},E,E_{\lambda_i})$ which yields the existence and uniqueness of the moments. The derivation of the moment system is given by integration based on the formal  equation  \eqref{eq:der}. For simplicity, we assume in the following proof that 
$f$ is absolutely continuous with respect to the Lebesgue measure. We denote the induced density also by $f.$ 
\par 
First, note that since $\partial_{\lambda_i} f$ is a conservative equation with initial data \eqref{der-in} and therefore 

\begin{align}
	\int_X \partial_{\lambda_i} f(v,\lambda,t) d v = 0.
\end{align}
Second, the first and the third equation of  system \eqref{eq:mom_eq} follow immediately by integration of the mean field equation \eqref{meanfield}. Indeed for the third equation we obtain

\begin{equation}
	\partial_t \int_X v_i v_j f\ dv-  \sum_{k=1}^d \int v_i v_j \partial_k (C \nabla_v \Phi(v,y,\lambda) f)_k \ dv =0,  \quad i,j=1,\dots,d,
\end{equation} 
and, integrating by parts

\begin{equation}
\partial_t E_{i,j} + \sum_{k=1}^d \int \partial_k(v_i v_j) (C \nabla_v \Phi(v,y,\lambda) f)_k \ dv =0.
\end{equation} 
Thus

\begin{align}
\partial_t E_{i,j} + \int  \left[  (C \nabla_v \Phi(v,y,\lambda))_i v_j f + (C \nabla_v \Phi(v,y,\lambda))_j v_i f \right] dv =0,\\
\partial_t E_{i,j} +\sum_{l=1}^d (C G^T \Gamma^{-1})_{i,l}y_l m_j - \sum_{l=1}^d (C G^T \Gamma^{-1}G)_{i,l}E_{l,j} \\+ \sum_{l=1}^d (C G^T \Gamma^{-1})_{j,l}y_l m_i - \sum_{l=1}^d (C G^T \Gamma^{-1}G)_{j,l}E_{l,i}=0.
\end{align}
Hence,  we obtain  equation \eqref{eq:mom_eq}

\begin{equation}
\partial_t E + C G^T \Gamma^{-1} (y\otimes m - GE) + \left[C G^T \Gamma^{-1}(y\otimes m - GE )\right]^T=0.
\end{equation}
Since the  operator $C$ is linear in  $f$ we obtain  

\begin{equation}
	\frac{\partial C}{\partial \mathbb{\lambda}_i} = E_{\lambda_i}-2 m_{\lambda_i} \otimes m, 
\end{equation}
and similar to term $T_2:$

\begin{align}\label{eq:grad_phi_lambda}
    \frac{\partial (\nabla_v \Phi)}{\partial \mathbb{\lambda}_i}&=
     (\sum_{i=1}^l G_i)^T  \Gamma^{-1}\Big(y-Gv\Big)+
   \left( G \right)^T \Gamma^{-1} \Big( \sum_{i=1}^l y_i - G_i v  \Big). 
\end{align}
Hence, integration of  \eqref{eq:der} yields 

\begin{align}
    \partial_t m_{\lambda_i} + \Big(E_{\lambda_i}-2 m_{\lambda_i} \otimes m\Big)
    \int_{X}\nabla_v  \Phi(v,y,\mathbb{\lambda}) f\ dv +\\
C \int_{X} \Big( (\sum_{i=1}^l G_i)^T  \Gamma^{-1}\Big(y-Gv\Big)+
   \left( G \right)^T \Gamma^{-1} \Big( \sum_{i=1}^l (y_i - G_i v)  \Big) f \, dv \label{eq:sec}
\\
+ C \int_X \nabla_v \Phi(v,y,\mathbb{\lambda}) \partial_{\mathbb{\lambda}_i} f  \ dv=0.  
\end{align}
Due to assumption \eqref{ass1} the integrals involving $\nabla_v \Phi$ are computed explicitly

\begin{align}
  \int_{X} \nabla_v \Phi(v,y,\mathbb{\lambda}) f  \ dv &= \int_{\R^d} G^T \Gamma^{-1} (y-Gv) f \ dv  = G^T \Gamma^{-1}\left( y-G\ m \right), \\
  \int_{X} \nabla_v \Phi(v,y,\mathbb{\lambda})\partial_{\mathbb{\lambda}_i} f  \ dv & = G^T \Gamma^{-1}\left( y\int_{\R^d} \partial_{\mathbb{\lambda}_i} f  dv - G\int_{\R^d} v\partial_{\mathbb{\lambda}_i} f dv \right) =  + G^T \Gamma^{-1} G\ m_{\lambda_i}. 
\end{align}
Furthermore, we obtain 

\begin{align}
C& \int_{X} \Big( (\sum_{i=1}^l G_i)^T  \Gamma^{-1}\Big(y-Gv\Big)+
   \left( G \right)^T \Gamma^{-1} \Big( \sum_{i=1}^l (y_i - G_i v)  \Big) f \, dv\\
  =&C  (\sum_{i=1}^l G_i)^T \Gamma^{-1} ( y- G m) + C \left( G\right)^T \Gamma^{-1}\left( \sum_{i=1}^l (y_i - \G_i m ) \right),
\end{align}
leading to the  equation for $m_{\lambda_{i}}.$ The equations for  $\frac{d}{dt}E_{\lambda_i}$ are obtained using a similar computation. 
\end{proof}

Some remarks are in order. 
\begin{itemize}
	\item Note that the approximation to the Pareto front $P(t)$ on the mean field level 
is given by
 
\begin{align}
	P(t)=\{ m(\lambda,t): \lambda \in \Lambda \}. 
\end{align}
Hence, solving a coupled system of ordinary differential equations of dimension $(l+1)\times (d+d^2)$  leads to information on $\nabla_\lambda m(t).$  This allows to obtain information for  an adaptive strategy for the choice of $\lambda$ as follows: 
Assume for a fixed $\overline{\lambda},$ the optimal state is given by $m(\overline{\lambda},T)$ for some $T$ fixed and sufficiently large. Then, we may use a Taylor expansion of $m$ to obtain 

\begin{align}\label{formula}
	m(\overline{\lambda}+\Delta \lambda,T) = m(\overline{\lambda},T) + \Delta \lambda \cdot \nabla m(\overline{\lambda},T) + h.o.t.,
\end{align}
where $\nabla m(\lambda,t)=\left( m_{\lambda_i} \right)_{i=1}^l$.  The previous expansion can be used in two ways:  For a given update $\Delta \lambda \in \R^l$ such that $\overline{\lambda} + \Delta \lambda \in \Lambda$, equation \eqref{formula} yields  an approximation on the new optimal value of the Pareto front $P(t)$.  Second, we observe that the system \eqref{eq:mom_eq} can be solved independently of the dynamics of $f=f(v,\lambda,t)$ leading to a family of solutions for $\lambda \in \Lambda$ and $t \geq 0$ 

\begin{align}
	\left( m(\lambda,t), \; \nabla m(\lambda,t) \right),
\end{align}
that can be computed a priori.  We are interested in obtaining a discrete choice of $\lambda^k \in \Lambda$ for $k=1,\dots,K$ such that the Pareto set $S_U= \{ u^*(\lambda): \lambda \in \Lambda \}$ is  approximated without clustering. Since for $T$ large we have 
$m(\lambda,T) \approx u^*(\lambda)$ we may utilize equation \eqref{formula} to determine at least the norm of the update $\Delta \lambda = \lambda^{k+1}-\lambda^k$ such that the distance on $S_U$ is bounded by a given tolerance $\delta>0$ by requiring

\begin{align} \label{update}
	\| \Delta \lambda \| \| \nabla m(\lambda^k ) \| \leq \delta.
\end{align}
 This choice  leads to  numerical results shown later that also approximates the Pareto front $S_G$ very well with only a few discretization points $k=1,\dots,K$ in $\Lambda.$  

\item The convergence results on the EnKF require usually $n$ or $t$, respectively to tend to infinity. In the particular situation where the system of ordinary differential equations allows for  steady--state solutions $(m,E, \nabla m, \nabla E)$, this value is therefore expected to be also a solution to the Pareto problem. The following equations characterize the steady--state solutions to \eqref{eq:mom_eq} for $i=1,\dots,l$  only in the case {\bf d=1}: 

\begin{align}
	&G m={y}, \qquad &m_{\mathbb{\lambda}_i}=c_{1,i}, \qquad &G^2 E={y^2}, \qquad &E_{\mathbb{\lambda}_i} =c_{2,i}\label{eq:null_1}, \\
	&m=c_1, \qquad &m_{\mathbb{\lambda}_i} =c_{1,i}, \qquad &E=m^2, &E_{\lambda_i}=2m \ m_{\mathbb{\lambda}_i}. \label{eq:null_2} 
\end{align}
where, $c_{1,i}$ and $c_{2,i}$ are arbitrary constants. Note that if $(m,E)$ is a set of moments of an underlying distribution function $f_\infty(u,\lambda),$ then by definition we obtain that $E\geq m^2$ imposing restrictions on the set of admissible constants $c_{k,i}$ for $k=1,2$ and $i=1,\dots,l,$. 

\item The case ${\bf d=1}$  also allows for an explicit computations of the Pareto front are possible, provided that the operator 
$\G= \sum\limits_{i=1}^l \lambda_i G_i u: X \to Y$ is invertible. 
In this case, the true solution is given by 

\begin{align}
	u^*(\mathbb{\lambda})=\G^{-1}y
\end{align}
and on the mean field level, we expect $f(v,\lambda,t)=\delta(v-u^*(\mathbb{\lambda}))$ to be the stationary solution. In fact, the following computation verifies 
that $f$ is a stationary state of the moment system \eqref{eq:mom_eq}. Note that this particular probability measure $f$ defines a distribution on the set of functions   $\psi \in C^\infty_0(X)$ by 

\begin{align}
f[\psi]:=	\int_X \psi(u) d f(u,\lambda,t)  = \psi(u^*(\mathbb{\lambda})). 
\end{align}
Hence, for $\psi(u)=u$ we obtain $G m(\lambda,t) = y$ and for $\psi(u)=u^2$ we have $G^2 E(\lambda,t)=y^2.$ Assuming that $\lambda \to u^*(\lambda)$ is differentiable with respect to $\lambda,$ the weak derivative is 

\begin{align}
f_{\mathbb{\lambda}_i} [\psi]  = \psi'(u^*(\mathbb{\lambda})) u^*_{\lambda_i}(\mathbb{\lambda}), 
\end{align}
and hence, 
$m_{\lambda_i} = u^*_{\lambda_i}(\mathbb{\lambda})$ and $E_{\lambda_i} = 2 m m_{\lambda_i}^2$.  Since $\G u^*(\lambda) =y$ we obtain that $\partial_{\lambda_i} \G \; u^*(\lambda)  =  -  \G u_{\lambda_i}^*(\lambda) + \partial_{\lambda_i} y$ leading to the equality $\partial_{\lambda_i} \G \; m = - \G  m_{\lambda_i} + \partial_{\lambda_i} y$. Hence, it is a steady state of equation \eqref{eq:mom_eq}. 
\end{itemize}

\section{Computational Results}\label{sec_5}
For a numerical solution to the approximation of the Pareto front $S_U$ and $S_G,$ respectively, we compare two strategies. In the  direct approach we sample on an equidistant grid on $\Lambda$ the values of $\lambda^k$. In  an adaptive strategy the solution to the mean field moment system \eqref{eq:mom_eq} is utilized. Without loss of generality, in the numerical tests we assume $l=2$, so that $\G(u)=\mathbb{\lambda} \G_1(u) + (1-\mathbb{\lambda})\G_2(u)$ and such that 
$\Lambda$ is parameterized by a single parameter $\lambda \in [0,1].$  Moreover, we set $y=0$, $\eta=0$, $\Gamma=\mathbb{1}$ and $T_{fin}=10$ for all computations. To solve \eqref{eq:mom_eq}, we use a Matlab function \textsc{ode45}  and  initial data  recovered from the ensemble particles, i.e.,  $m_0=\frac{1}{J}\sum_{j=1}^J u_j$, $E_0=\frac{1}{J}\sum_{j=1}^J u_j^2$, $m_{\lambda_i, 0}=0$, $E_{\lambda_i,0}=0$.

\par 

Even so the theory is presented in the linear case only, we present numerical results on nonlinear objective functions $\G_i$ in the numerical tests. Note that the existing literature on convergence and stability of the EnKF do not cover the nonlinear case, even in the case of only finitely many particles. 
Numerically, we propose two possible strategies to adapt method to the nonlinear case. In the first case and if the derivative of $\G$ is computable, we may linearize \eqref{opt1} up to the first order: 

 \begin{align} 
	\norm{y-\G(u)} \approx \norm{y-\G(u_0)+\G'(u_0)u_0-\G'(u_0)u}=\norm{\tilde{y}-\G'(u_0)u}. 
\end{align}
Replacing the nonlinear objective by its linearized version allows to apply the aforementioned results. However, an advantage of the EnKF is that it also applies to functions where no derivative information is available. Therefore, we secondly, consider $\G(m)$ instead of $G \; m$ in system \eqref{eq:mom_eq}. This simple heuristic modification is not justified by a moment analysis, since, in fact, the moment system in the nonlinear case is not closed.

\subsection{Direct approach}

Starting from an initial ensemble $u^0_j$ for $j=1,\dots,J$ and a set of fixed  vectors $\mathbb{\lambda}_k \in \Lambda$ for 
$k=1,\dots,N_{\lambda}$,  the  particles are updated following \eqref{eq:updating}.  As in \cite{deb2019generating} we chose the vectors  to be equispaced. The algorithm is described in detail in Figure \ref{alg_direct}.

 \begin{algorithm}
	\caption{Direct approach} \label{alg_direct}
	\begin{algorithmic}[1]
	\State Given $J$ samples $u_j^{0}$, with $j=1,\dots,J$ and a vector $\mathbb{\lambda}_{0_i}$, $i=0,\dots, l$
	\State Set $n=0$, $t^0=0$ and final time $T_{fin}$ sufficiently large
		\For {$k=1,2,\ldots,N_\mathbb{\lambda}$}
			\State Solve the EnKF procedure:
			$\G= \mathbb{\lambda}_{k_i} \cdot \G_i$,	$y=\mathbb{\lambda}_{k_i} \cdot y_i $
			\While{$t^n \le T_{fin}$} 
			\begin{align*}	
u_j^{n+1}&=u_j^{n}+ C(\boldsymbol{u}^n)\left( D(\boldsymbol{u}^n) + \frac{1}{\Delta t} \Gamma^{-1} \right)^{-1}\left[ y_j -\mathcal{G}({u}_j^{n}) \right]\\
C(\boldsymbol{u}^n)&= \frac{1}{J}\sum_{j=1}^J(u_j^{n}-\overline{\boldsymbol{u}}^n)\otimes(\mathcal{G}(u_j^{n})-\overline{\mathcal{G}}) \tag{EnKF}\\
D(\boldsymbol{u}^n)&= \frac{1}{J}\sum_{j=1}^J \left[\mathcal{G}(u_j^{n})- \overline{\mathcal{G}}\right]\otimes\left[\mathcal{G}(u_j^{n})- \overline{\mathcal{G}}\right]
\end{align*}
\EndWhile
			\State The mean $ \frac1J\sum\limits_{j=1}^J u^{T_{fin}}_j$ is an approximation to $u^*(\lambda)$ 
		\EndFor

	\end{algorithmic} 
\end{algorithm}

\subsection{Adaptive strategy}

In the adaptive strategy  the vector $\mathbb{\lambda}_k$ is obtained iteratively for $k=1,\dots$ according to equation \eqref{update}. Intuitively, the equation yields a denser set of vectors $\lambda_k$ where the slope of the Pareto set $S_U$ measured through $\| \nabla m(\lambda_k) \|$ is large.   In order to state the  update formula an ordering on $\Lambda$ is introduced as  lexicographic order on the set $\Lambda$. The adaptive strategy using the update  given by equation \eqref{update} is given below in Figure \ref{alg_adapt}. 

\begin{algorithm}
	\caption{Adaptive approach}  \label{alg_adapt}
	\begin{algorithmic}[1]
	\State Given $J$ samples $u^{j,0}$, with $j=1,\dots,J$ and the update constant $\delta>0$
	\State set $n=0$, $t^0=0$, the final time $T_{fin}$ and $\mathbb{\lambda}_1=0$
		\While{$\mathbb{\lambda}_k<1$}
			\State $\Bar{u}_k \leftarrow$ solving the EnKF procedure (as in Step 5 of the Direct Approach)
			\State $[m,m_\mathbb{\lambda},E,E_\mathbb{\lambda}]\leftarrow$solve the ODE system \eqref{eq:mom_eq}  
			with initial conditions \eqref{initial-mh}
			\State $\Bar{u}^0_{k+1} \leftarrow$ {sampling from a Gaussian prob. distr. with mean $m$ and variance $E$}
			\State { $\mathbb{\lambda}_{k+1}\leftarrow \mathbb{\lambda}_k + \frac{\delta}{\norm{m_\mathbb{\lambda}}}e_{lo}$, $e_{lo}$ the direction defined by the lexicographic order}
			\EndWhile
	\end{algorithmic} 
\end{algorithm}

\subsection{Test 1: Convex Example}
As numerical test we consider the minimization of two convex functions $\G_1,\G_2$:

\begin{equation}
    \G_1=\left(u-\frac{1}{2}\right)^2 \qquad \G_2=\left(u+\frac{1}{2}\right)^2.
\end{equation}
The initial ensemble is chosen using the uniform distribution $U_0\sim \mathcal{U}(-1,1)$ and we use $J=20$. A comparison of the direct and the adaptive algorithm with $\delta=10^{-3}$ and  $N_\mathbb{\lambda}=25$ is presented. The approximation of the Pareto front $S_G$ is shown in Fig.\ \ref{fig:test1}. 

\begin{figure}[h!]
     \hspace{-1.9cm}
    \includegraphics[scale=0.45]{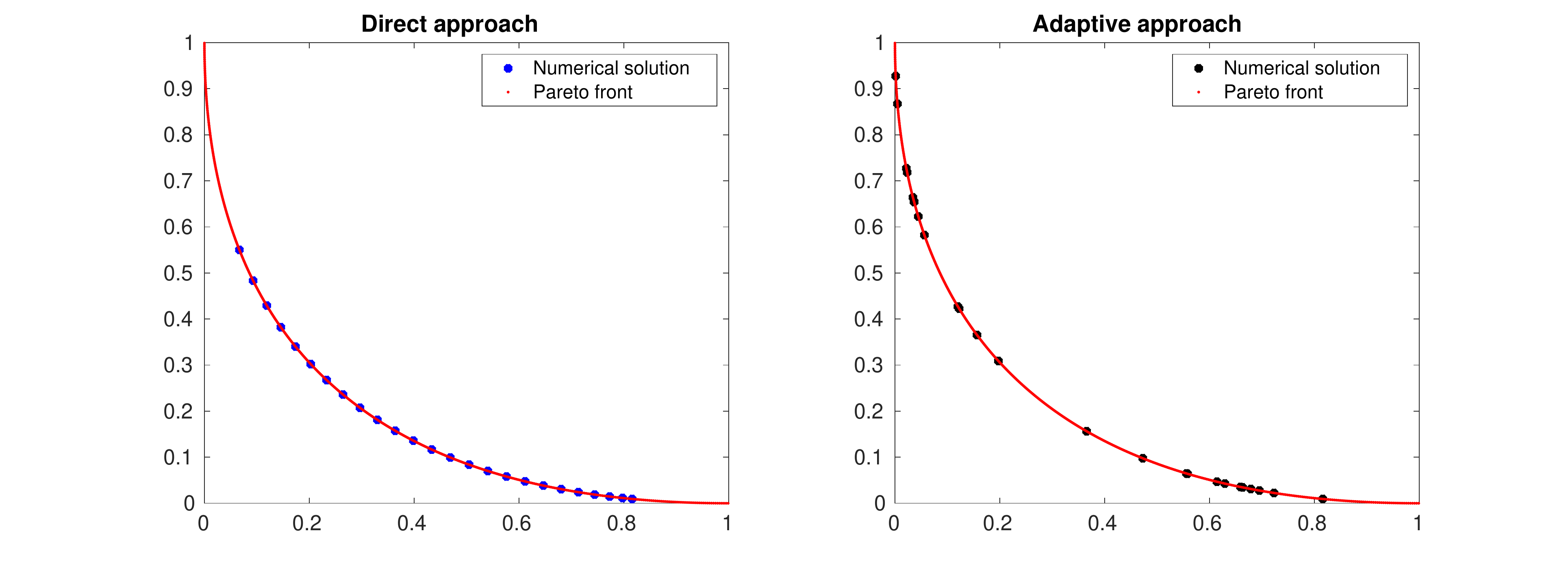}
    \caption{Test 1. Numerical approximation of the Pareto front, with the direct approach (left) and the adaptive approach (right). The red line is the analytical Pareto front. The dots indicate the mean of the ensemble at final time.}
    \label{fig:test1}
\end{figure}
Different behavior of the two procedures is observed, where  the solution obtained by the adaptive approach covers a larger percentage of the Pareto front. Moreover, in Fig.\ \ref{fig:test1_lambda}, we show the distribution of $\mathbb{\lambda}$ in the interval $[0,1]$ for the direct approach (left) and the adaptive one (right). This reflects the fact, that in the adaptive approach a varying grid on $\Lambda$ is obtained according to the update formula \eqref{update}. This simulation validates the intuitive interpretation of this equation.  

\begin{figure}[h!]
   \hspace{-1cm}
    \includegraphics[width=0.55\linewidth]{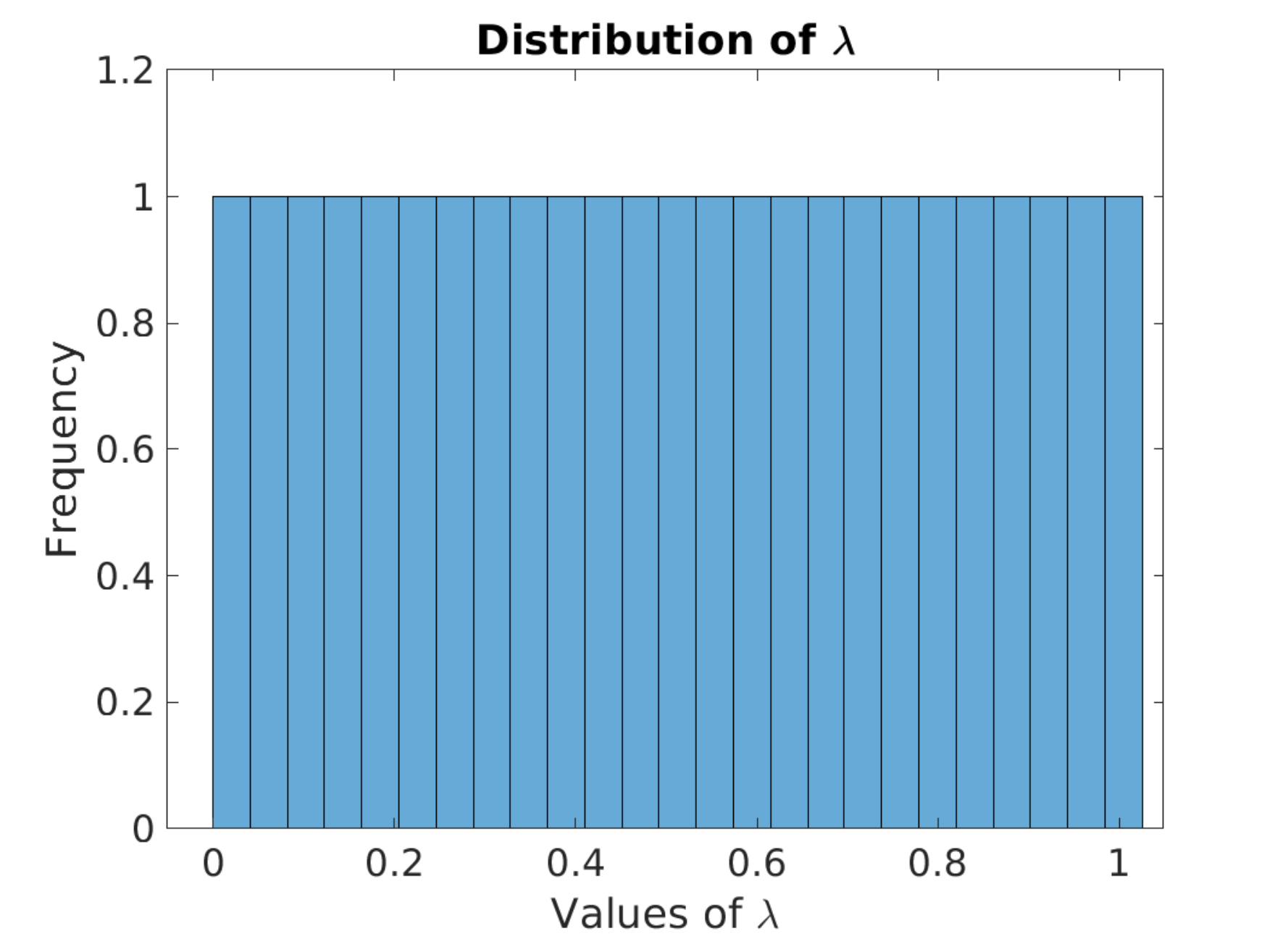}
    \includegraphics[width=0.54\linewidth]{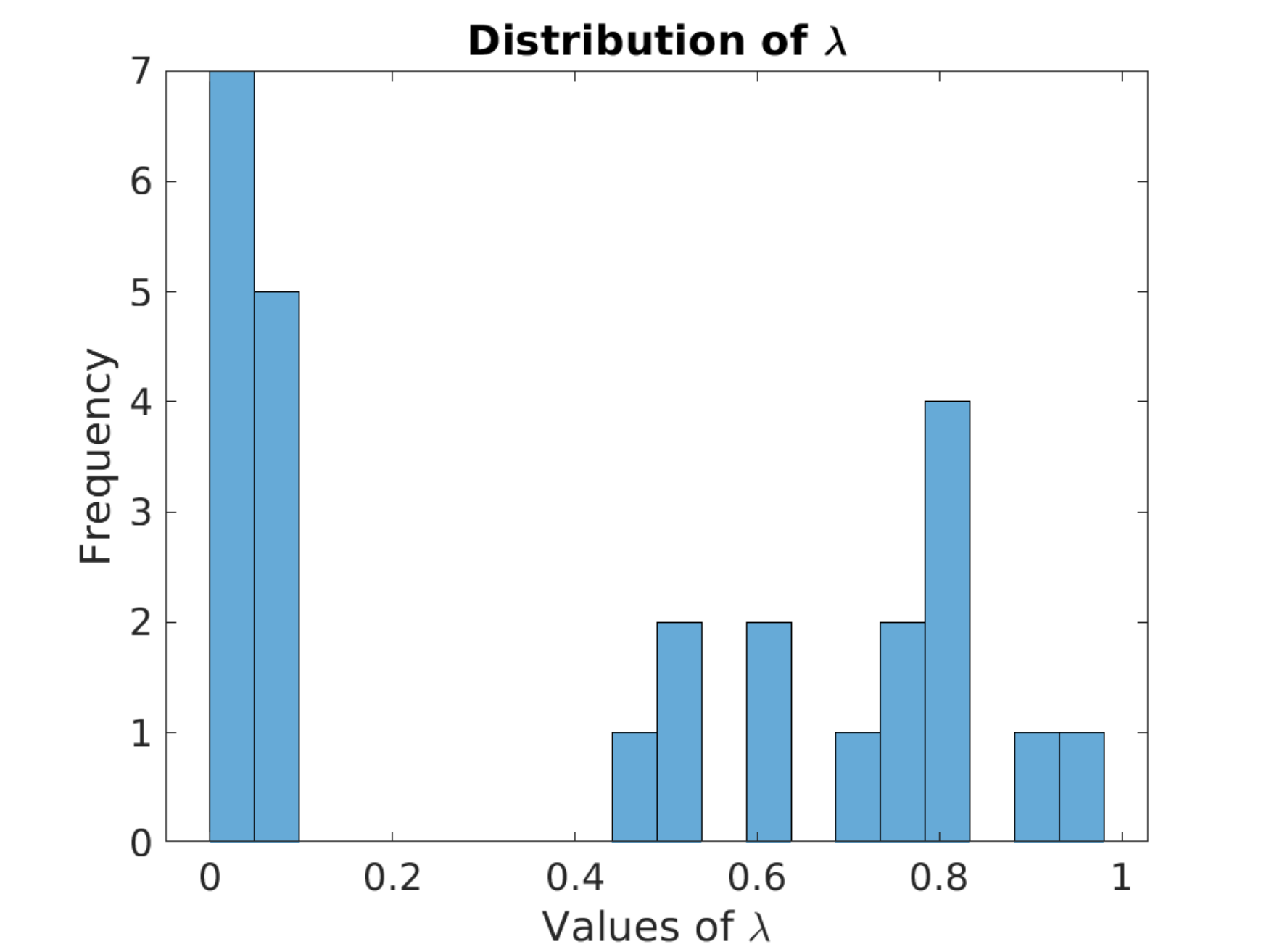}
    \caption{Test 1. Distribution of the sampled values $\mathbb{\lambda}$. }
    \label{fig:test1_lambda}
\end{figure}
A similar behavior is obtained when the discretization in $\lambda$ is refined. In Fig. \ref{fig:test1_2}  the updating constant $\delta$ is $\delta=10^{-4}$  and   $N_\mathbb{\lambda}=54$. 

\begin{figure}[h!]
 \hspace{-2.2cm}
 \includegraphics[scale=0.45]{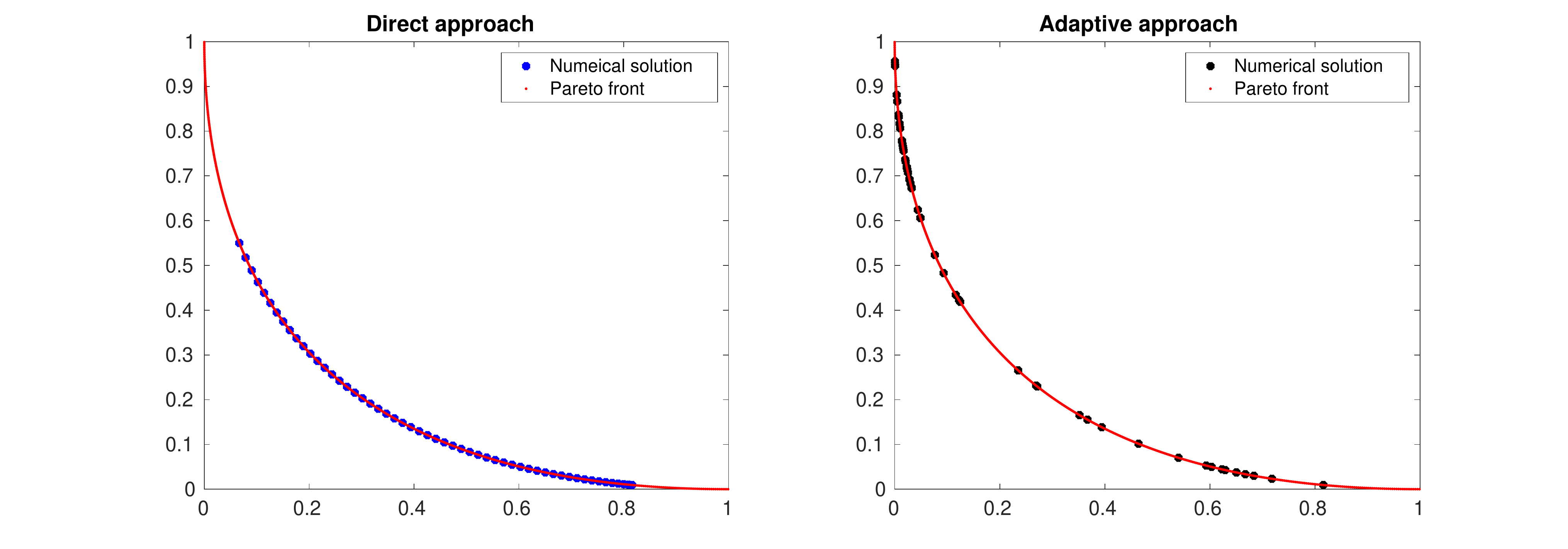}
    \caption{Test 1. Numerical approximation of the Pareto front, with the direct approach (left) and the adaptive approach (right). The red line is the exact Pareto front.} 
    \label{fig:test1_2}
\end{figure}
Focusing on the direct approach, Fig.\ \ref{fig:test1_2} (left), we notice that even for a larger number of values $N_\lambda$    the whole Pareto front is not covered. 


This graphical interpretation is also  compared quantitatively. Given a parametrization of the Pareto front and an equispaced grid, we consider the sum of the minimal distance $d$ between each point of the grid, $x_i$ for $i=1,\dots,N_g$ with $N_g>0$, and the mean of the ensembles at terminal time  for different values of  $N_\mathbb{\lambda}$

\begin{equation}\label{eq:distance}
    \frac{\sum_{i=1}^{N_g} \min d(x_i,u^*(\mathbb{\lambda}))}{N_g}
\end{equation}
The measure \eqref{eq:distance} is similar to the notion of performance metric (IGD) described in \cite{zhang2008multiobjective}. The comparison shows the improved performance of the adaptive approach for increasing number $N_\lambda$ as expected, see Fig.\ \ref{fig:test1_distance}.

\begin{figure}
    \centering
    \includegraphics[width=0.5\linewidth]{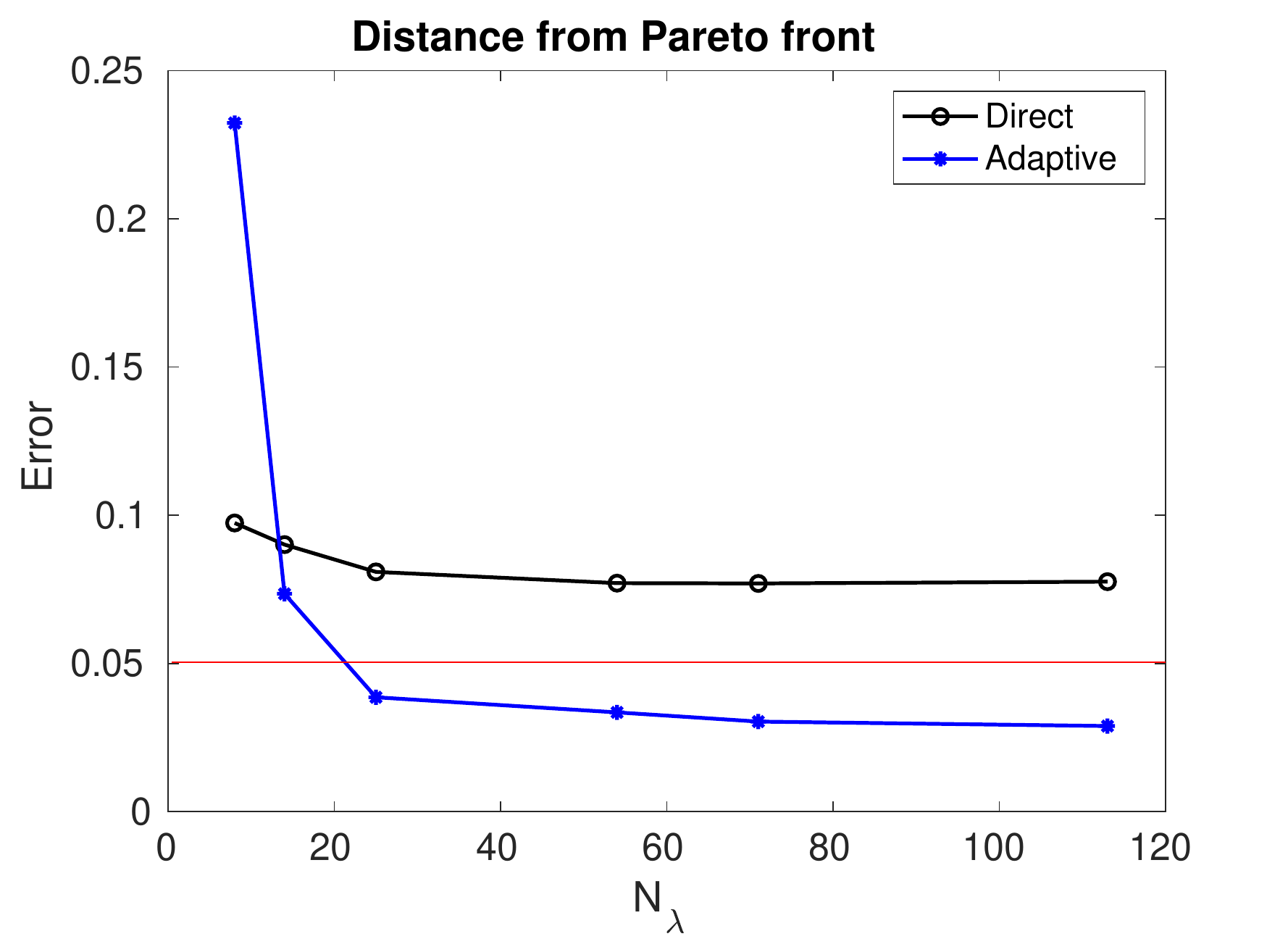}
    \caption{Test 1. The distance from the Pareto front computed by \eqref{eq:distance} is shown for different number of points, for the direct approach (black) and for the adaptive one (blue). The red line indicates the resolution of the Pareto front given by $N_g$.}
    \label{fig:test1_distance}
\end{figure}

\subsection{Test 2: Non-Convex Case}
We consider two non convex functions $\mathcal{G}_1$ and $\mathcal{G}_2$ defined by

\begin{equation}
    \G_1(u)=1 - e^{-(u-1)^2} \qquad \G_2(u)=1 -e^{-(u+1)^2}.
\end{equation}
and  an initial ensemble $U_0\sim \mathcal{U}(-2,2)$ of size $J=50$, and  $\delta=1\cdot 10^{-3}$ leading to  $N_\mathbb{\lambda}=64$. The comparison between the two approaches is shown on $S_G$ in Fig.\ \ref{fig:test2_nl34}. The behavior is similar to the previous case and shows the improvement of the adaptive approach compared with the direct approach.

\begin{figure}[h!]
\hspace{-2.9cm}
\includegraphics[scale=0.4]{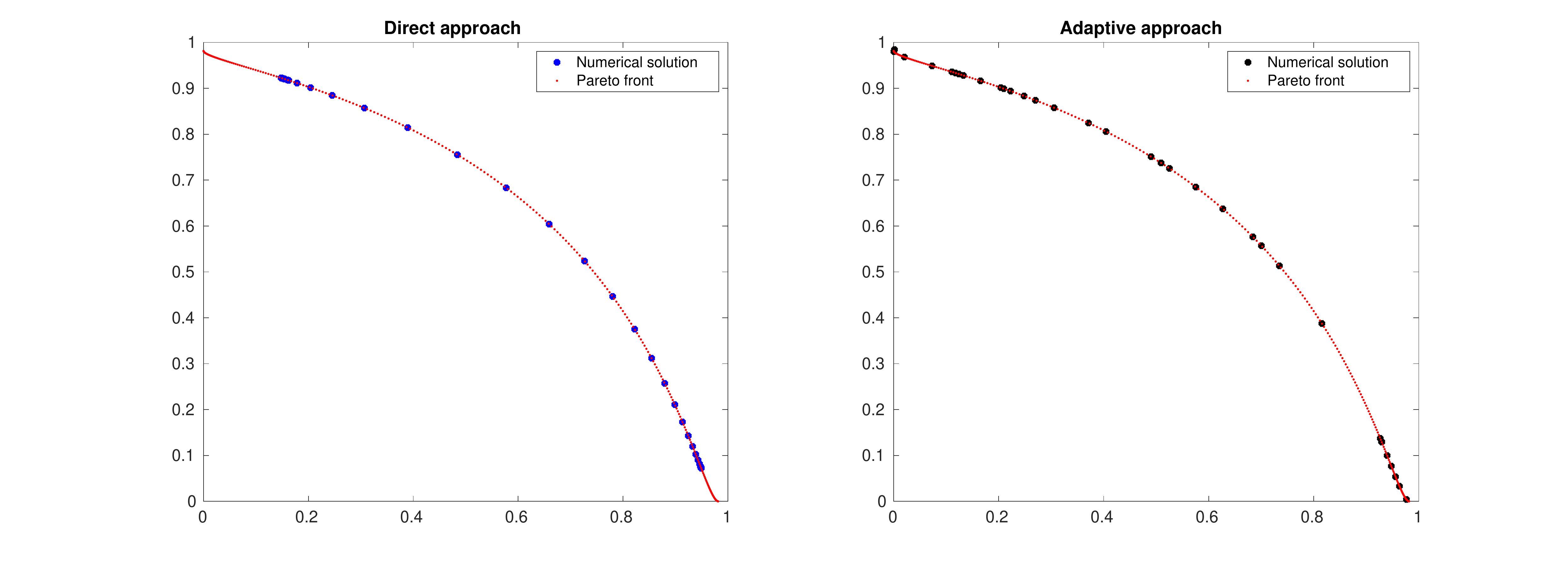}
    \caption{Test 2: Numerical approximation of the Pareto front in the non-convex case, with the direct approach (left) and the adaptive approach (right). The red line is the exact Pareto front.}
    \label{fig:test2_nl34}
\end{figure}


\subsection{Test 3: Multi-dimensional Parameter Space}
We consider $\G_1,\G_2$ two convex functions on $\mathbb{R}^2$ and given by 

\begin{equation}
    \G_1(u_1,u_2)=5(u_1 - 0.1)^2 +   (u_2 - 0.1)^2  \qquad \G_2(u_1,u_2)=(u_1 - 0.9)^2 + 5(u_2 - 0.9)^2.
\end{equation}
The initial ensemble is chosen uniformly distributed  $U_0\sim \mathcal{U}([0,1]^2)$ and we consider $J=30$ particles. We set $\delta=8 \cdot 10^{-4}$ and  $N_\mathbb{\lambda}=68$. A similar  behavior as before is observed in  Fig.\ \ref{fig:test3}. However, the approximation to $S_G$ does not match completely the analytically solution, especially in the region at $x= 2.$ It is assumed that this  is due to the terminal time and we refer to  Fig.\ \ref{fig:test3}-\ref{fig:test3_more} 
where $T_{fin}=50.$  Furthermore, we show the approximation to the set of Pareto points $S_U$ in Fig.\ \ref{fig:test3_ensembles}.   The adaptive choice of sampling $\Lambda$ leads to a relatively sharp resolution of the set $S_U$ compared with the direct approach. The later produces a cloud of points compared to the clusters obtained with the 
adaptive strategy.

\begin{figure}[h!]
	\hspace{-2.8cm}
	\includegraphics[scale=0.42]{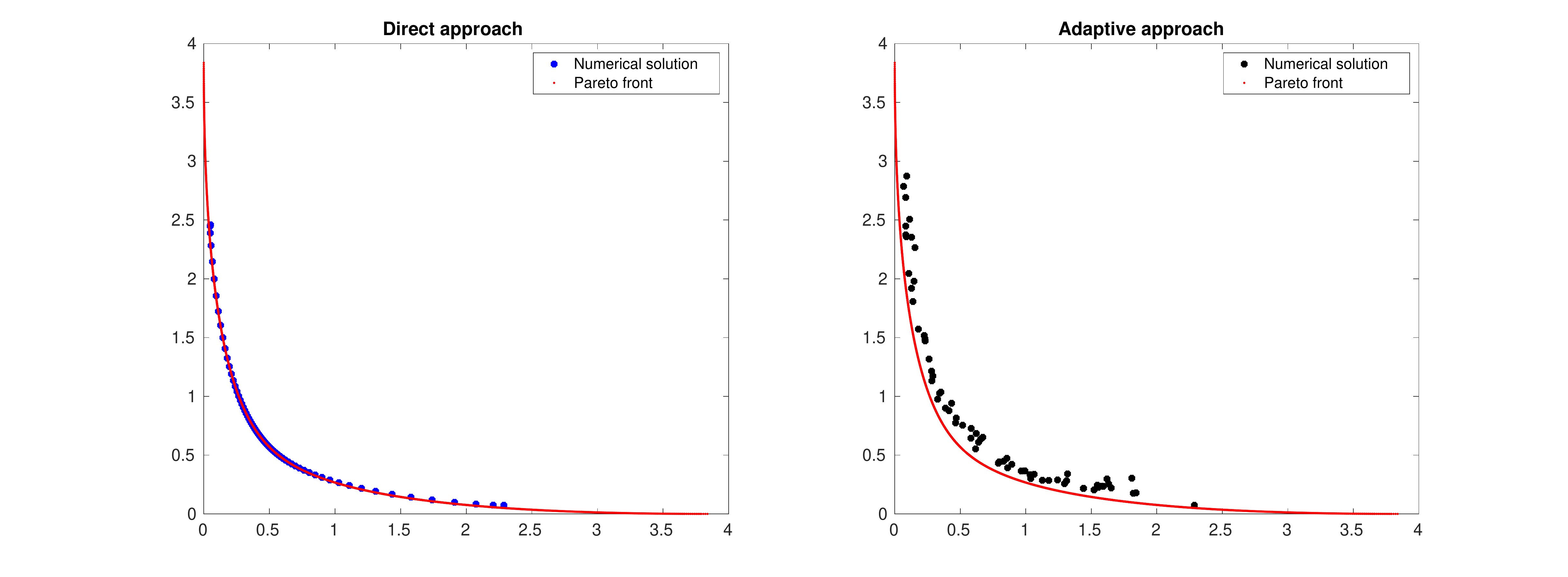}
	\caption{Test 3. Numerical approximation of the Pareto front, with the direct approach(left) and the adaptive approach (right) at $T_{fin}=5$. The red line is the analytical Pareto front.}
	\label{fig:test3}
\end{figure}

\begin{figure}[h!]
\hspace{-2.8cm}
\includegraphics[scale=0.38]{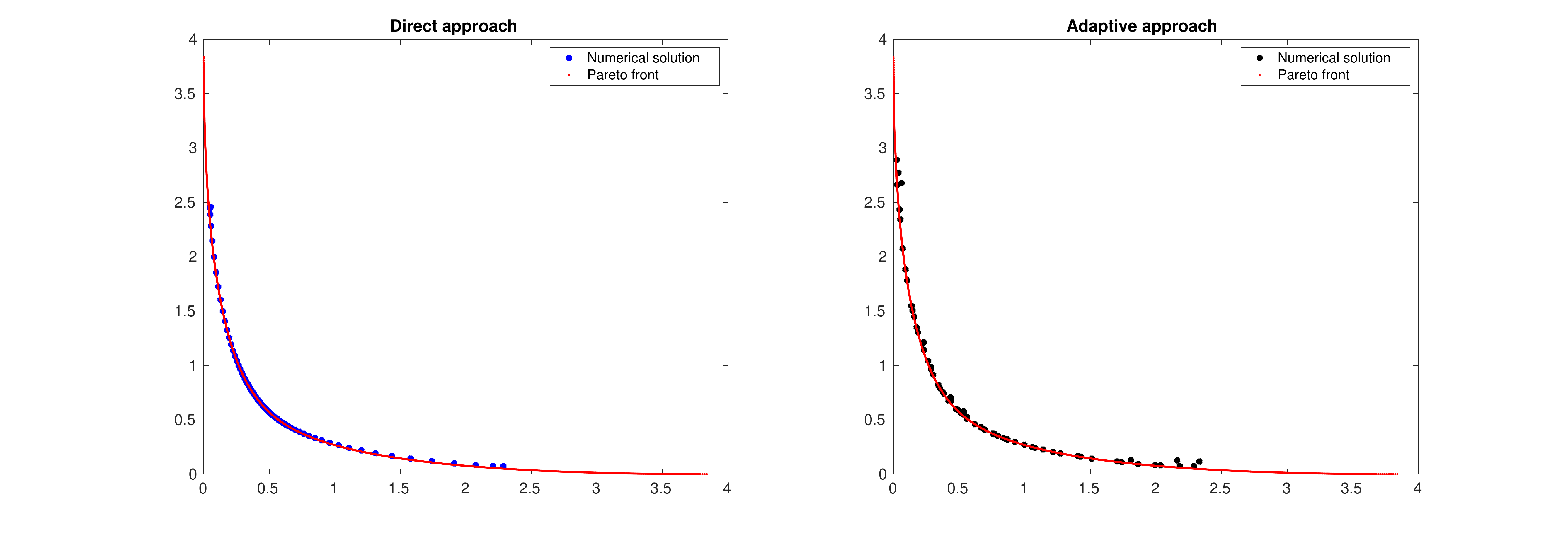}
    \caption{Test 3. Numerical approximation of the Pareto front, with the direct approach(left) and the adaptive approach (right) at $T_{fin}=50$. The red line is the analytical Pareto front.}
    \label{fig:test3_more}
\end{figure}


\begin{figure}[h!]
	\hspace{-0.8cm}
	\includegraphics[width=0.55\linewidth]{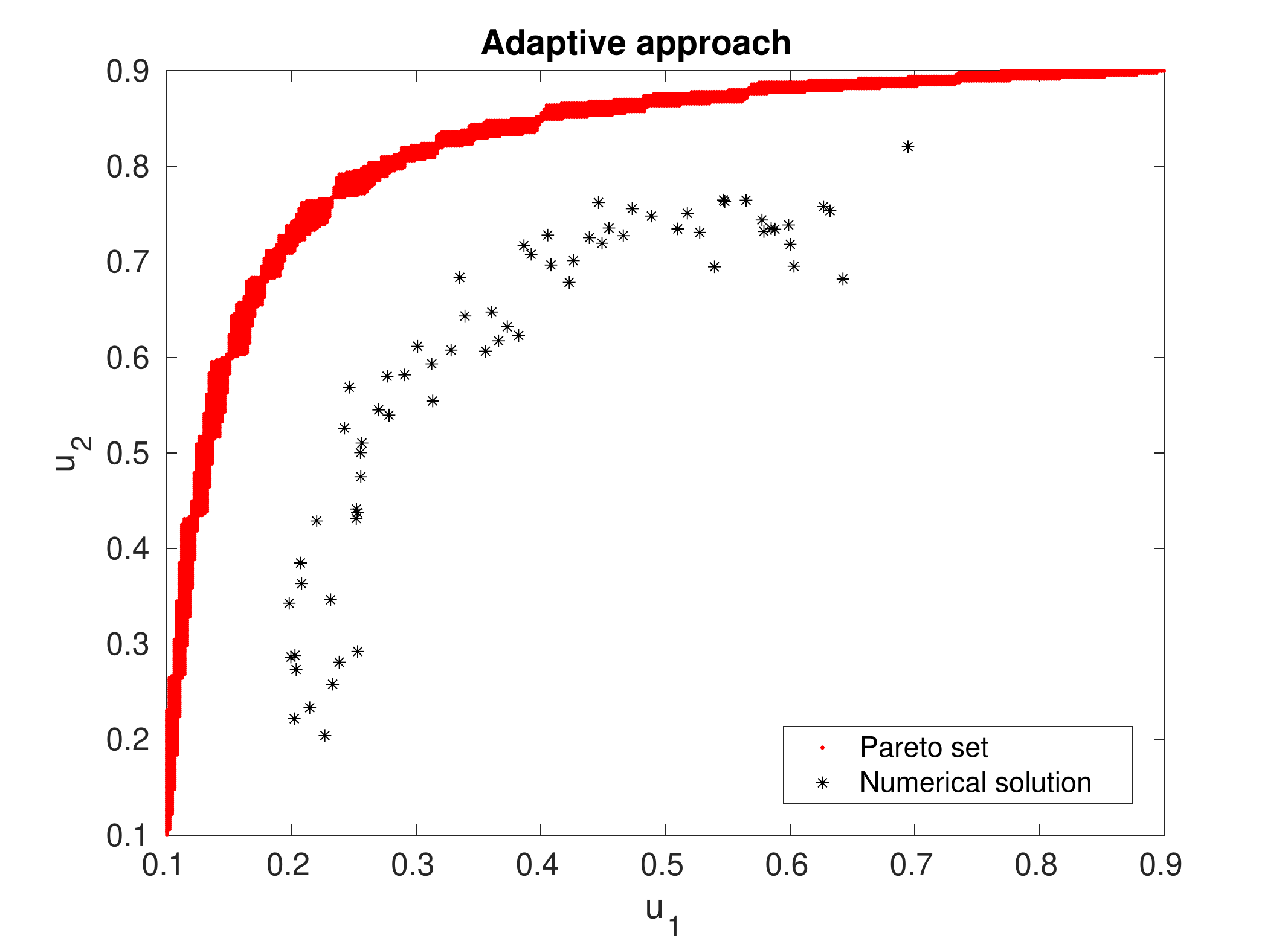}
	\includegraphics[width=0.56\linewidth]{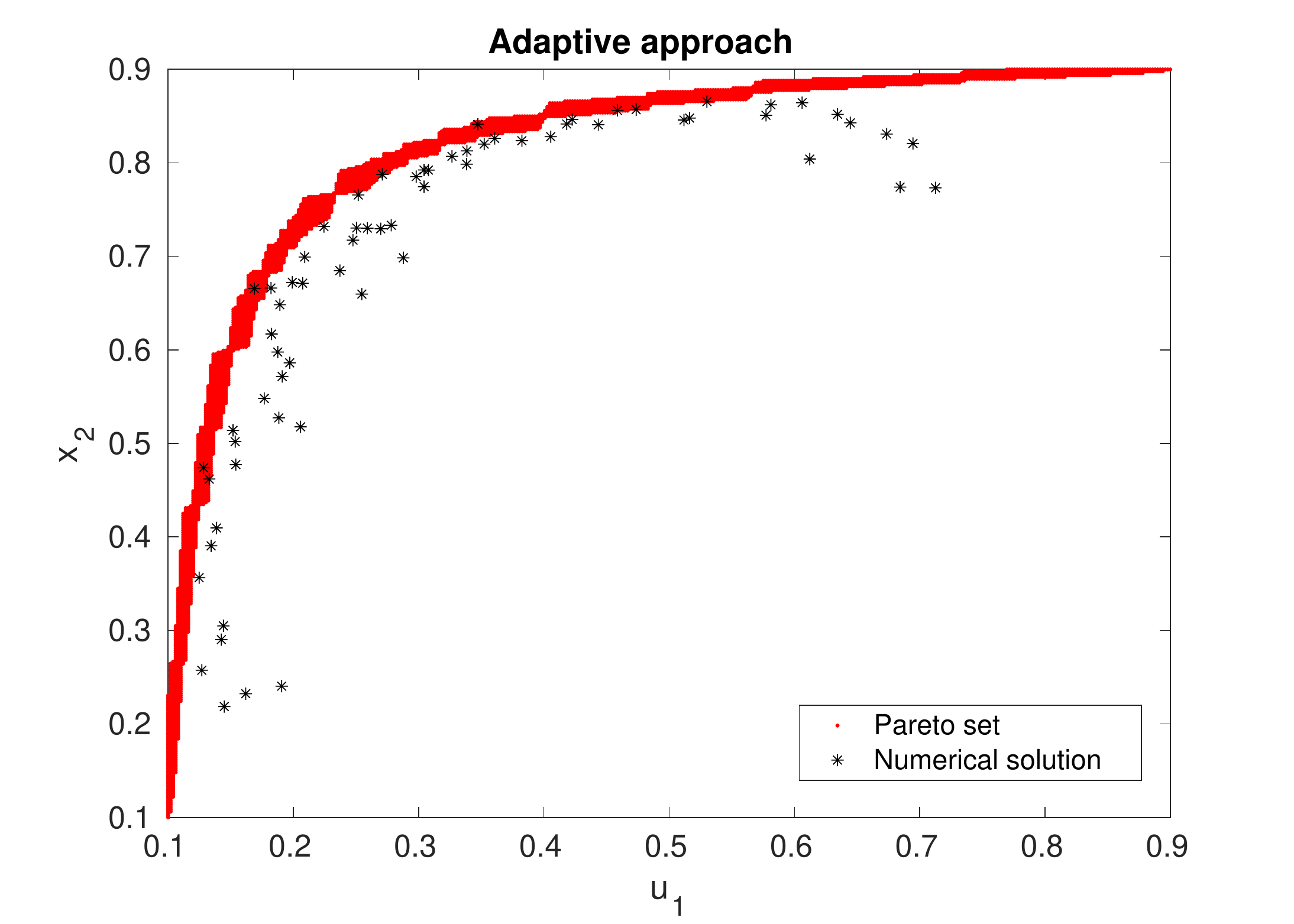}
	\caption{Test 3: Numerical approach of the Pareto set for the adaptive approach for $T_{fin}=5$ (left) and $T_{fin}=50$ (right).}
	\label{fig:test3_set}
\end{figure}


\begin{figure}[h!]
    \includegraphics[width=0.535\linewidth]{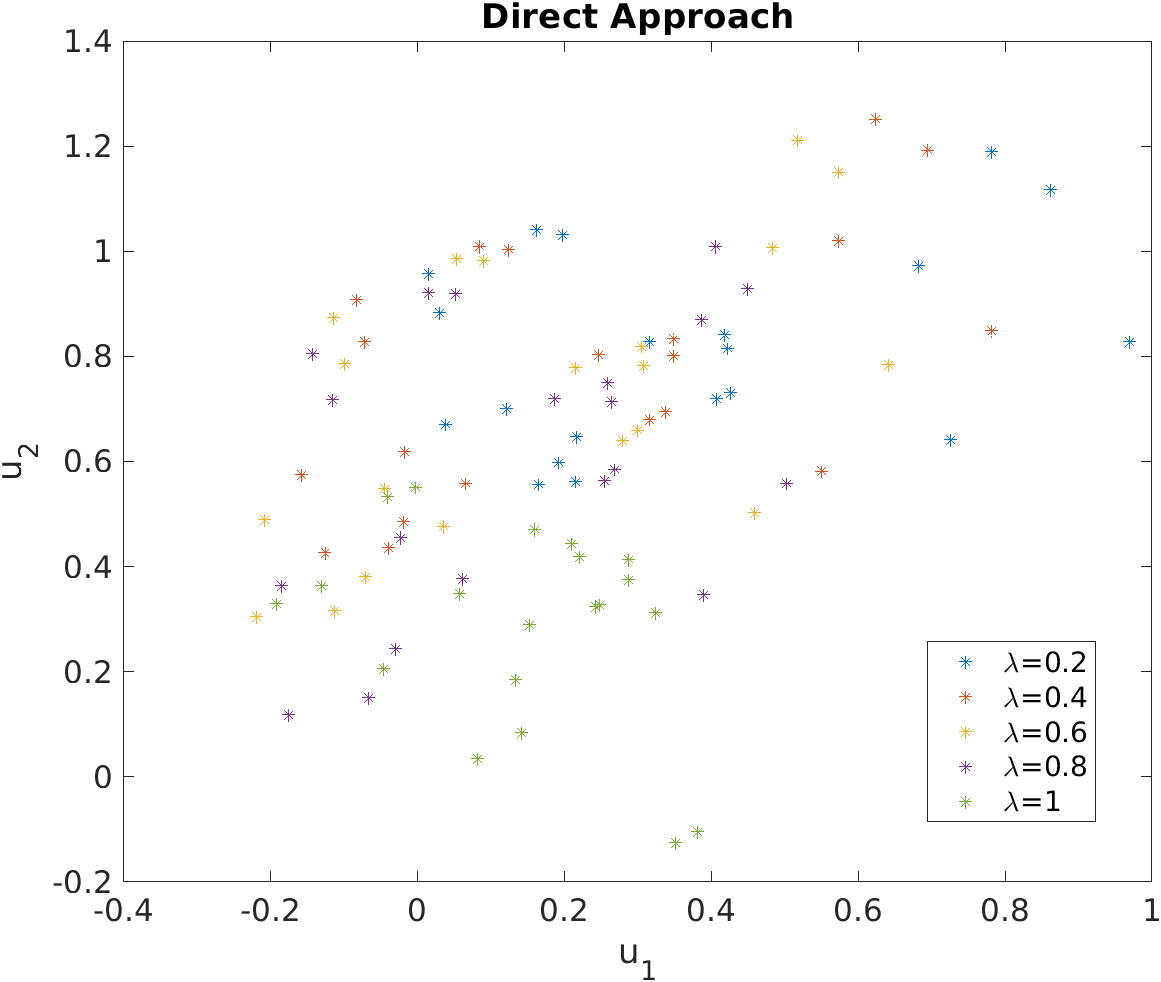}
    \includegraphics[width=0.58\linewidth]{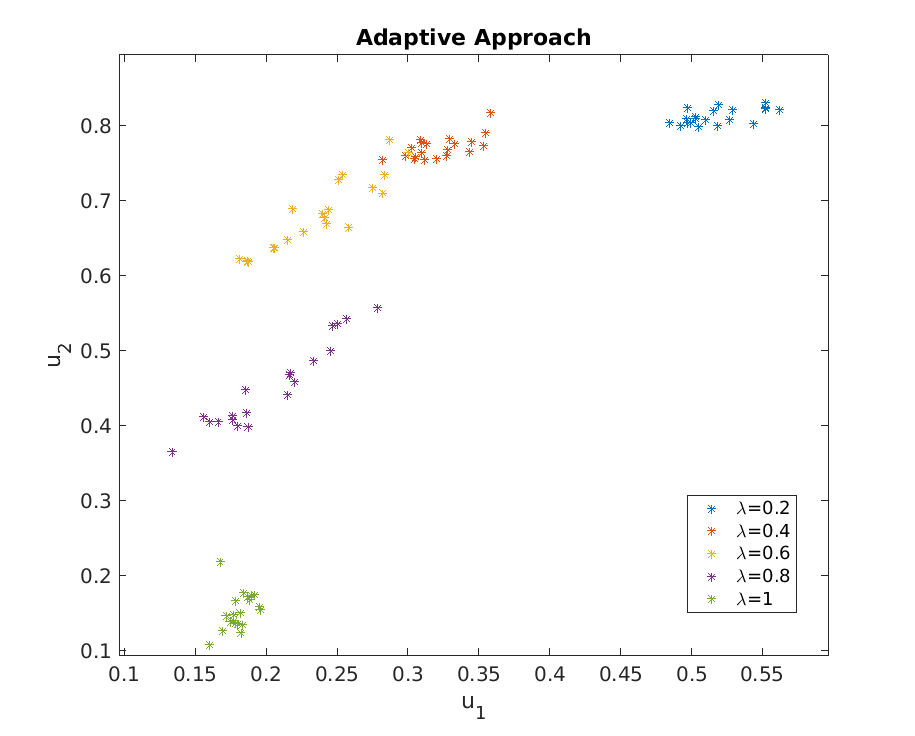}
    \caption{Test 3: Ensemble distribution at $T_{fin}=50$ for different values of $\mathbb{\lambda}$ indicated by color.}
    \label{fig:test3_ensembles}
\end{figure}

\section{Summary}
{The ensemble Kalman filter method has been extended to solve coupled inverse problems. The link to a multi--objective optimization problem has been shown and the analytical properties of the ensemble based method have been investigated. In particular, the mean field equation and their corresponding moment system have been presented and exploited to develop a new adaptive  approach for sampling the Pareto front. Numerical results show the improvement of the adaptive strategy also in the nonlinear case.   }

{\small \subsection*{Acknowledgments}
The authors thank the Deutsche Forschungsgemeinschaft (DFG, German Research Foundation) for the financial support through 20021702/GRK2326,  333849990/IRTG-2379, HE5386/19-2,22-1,23-1 and under Germany's Excellence Strategy EXC-2023 Internet of Production 390621612. 
}
\bibliographystyle{siam}
\bibliography{references}

\end{document}